\xpatchcmd\swappedhead{~}{.~}{}{}
\newtheoremstyle{mythm}
{}                
{}                
{}        
{}                
{\sc}       
{.}               
{0.5em}               
{}                
\theoremstyle{mythm}
\newtheorem{thm}{Theorem}
\newtheorem{cor}[thm]{Corollary}
\newtheorem{ex}[thm]{Example}
\newtheorem{re}{Remark}
\newcommand{\thistheoremname}{}
\newtheorem{genericthm}[thm]{\thistheoremname}
\newcommand{\A}{\mathcal{A}}
\newcommand{\code}{\mathscr{C}}
\newcommand{\bgw}{\text{BGW}}
\newcommand{\oa}{\text{OA}}
\newcommand{\ca}{\text{CA}}
\newcommand{\w}{\text{W}}
\newcommand{\gf}{\text{GF}}
\newcommand{\wt}{\text{wt}}
\newcommand{\dist}{\text{dist}}
\newcommand{\M}{\text{A}}
\newcommand{\sn}{\mathfrak{S}_v}
\newcommand{\fn}{\text{Func}}
\newcommand{\aut}{\text{Aut}}
\renewenvironment{proof}{{\indent\it Proof.}\hspace{0.2em}}{\mbox{
    \rule{0.075in}{0.15in}}} 
\newenvironment{myabstract}{\vspace{1em}\begin{adjustwidth}{3em}{3em}\begin{small}\textsc{Abstract.}\hspace{0.5em}}{\end{small}\end{adjustwidth}\vspace{1em}}
\titleformat{\section}{\normalfont\Large\scshape\centering}{\thesection.}{0.5em}{}
\titleformat{\subsection}{\normalfont\scshape}{\thesubsection.}{0.5em}{}
\begin{document}
\begin{center}
  {\Large\bf Optimal constant weight codes derived from  
  $\omega$-circulant balanced generalized weighing matrices}

    \end{center}
  \vspace{1em}
  HADI KHARAGHANI\footnote{Department of Mathematics and Computer Science,
    University of Lethbridge, Lethbridge AB T1K 3M4, Canada. Email: {\tt
      kharaghani@uleth.ca}}, THOMAS PENDER\footnote{Department of Mathematics,
    University of Simon Fraser, Burnaby BC V5A 1S6, Canada. Email: {\tt
      tsp7@sfu.ca}},
      VLADIMIR D. TONCHEV \footnote{Mathematical Sciences, Michigan Technological University,
     Email: {\tt
      tonchev@mtu.edu}}

\begin{myabstract}
A  family of $\omega$-circulant balanced weighing matrices with classical
parameters is used for the
construction of optimal constant weight codes over an alphabet of size $g+1$
and length $n=(q^m -1)/(q-1)$, where $q$ is an odd prime power, $m>1$, and
$g$ is a divisor of $q-1$.
\end{myabstract}

{\bf MSC}: 05B20, 94B25 \\

{\bf Keywords:}  constant weight code, balanced generalized weighing matrix.

\section{Introduction}\label{sec-intro}


A {\it weighing matrix} is a square $(-1,0,1)$-matrix $W=\w(v,k)$ of order $v$
such that $WW^t=kI$ for some positive integer $k$. A weighing matrix $W$ is
{\it balanced} if $(|W_{ij}|)$ is the incidence matrix of a symmetric balanced
incomplete block design (see \citet{design-theory} for the relevant definitions). 

The concept of a balanced weighing matrix can be generalized by allowing the
nonzero  elements of the matrix to be taken from some finite abelian group. To
this end, let $G$ be a finite abelian group of order $n$ not containing the
symbol $0$, and let $W$ be a $(0,G)$-matrix of order $v$. We say that $W=\bgw(v,k,\lambda;G)$ is a
{\it balanced generalized weighing matrix} (henceforth BGW) if the following hold: 
 \begin{enumerate*}[(1)]
  \item there are $k>0$ nonzero entries in every row, and
  \item there is a constant $\lambda>n$ such that the multisets
    $S_{ij} = \{W_{i\ell}W_{j\ell}^{-1} : W_{i\ell} \neq 0\text{, } W_{j\ell}
    \neq 0 \text{, } 0 \leq \ell < v\}$, for every $i<j$, contain $\lambda/n$
    instances of each element of $G$. 
 \end{enumerate*}


A matrix $A$ with first row
$(\alpha_0,\dots,\alpha_{n-1})$ is  {\it $\omega$-circulant} if
$A_{ij}=\alpha_{j-i}$ whenever $i \leq j$, and if $A_{ij}=\omega \alpha_{j-i}$
whenever $j<i$, where the indices are calculated modulo $n$. 
The following theorem, due to \citet{jungnickel-tonchev-perfect-codes-2},
gives a simple construction of $\omega$-circulant 
balanced genralized weighing matrices over a finite field $\gf(q)$ of
 order $q$.

\begin{thm}\label{trace-construction}
\label{T1}
 Let $F=\gf(q^{m+1})$, $K=\gf(q)$, let $\beta$ be a primitive element of $F$,
 and let $\omega^{(q^{m+1}-1)/(q-1)}=\beta$. Let
 $u=(Tr_{F/K}(\beta^i))_{i=0}^{k-1}$, and let $W$ be the $\omega$-circulant
 matrix with first row $u$. Then $W$ is a BGW over $\gf(q)^*$ with parameters 
 \begin{equation}\label{classical-parameters}
  \left(
  \frac{q^{m+1}-1}{q-1}, q^m, q^m-q^{m-1}
  \right).
 \end{equation}
\end{thm}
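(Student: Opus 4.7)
\noindent\textit{Proof proposal.}
Write $v=(q^{m+1}-1)/(q-1)$ for the order of $W$. The plan is to identify each row of $W$ with the values of a $K$-linear functional on $F$, and then to reduce both BGW conditions to elementary counts of $K$-points in hyperplane intersections inside $F$, viewed as an $(m+1)$-dimensional $K$-vector space.

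First, I would count the nonzero entries in the first row of $W$. Since $\beta^{v}$ has order $q-1$ and generates $K^{*}$, the set $\{\beta^{0},\beta^{1},\dots,\beta^{v-1}\}$ is a system of coset representatives for $F^{*}/K^{*}$ and picks out exactly one nonzero vector from each $1$-dimensional $K$-subspace of $F$. The trace $Tr_{F/K}$ is a nonzero $K$-linear functional with an $m$-dimensional kernel, and this kernel accounts for exactly $(q^{m}-1)/(q-1)$ of the representatives. The first row therefore has exactly $v-(q^{m}-1)/(q-1)=q^{m}$ nonzero entries, and by the $\omega$-circulant structure every row of $W$ has $q^{m}=k$ nonzero entries.

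The heart of the argument is to identify $W_{i\ell}$ with $Tr_{F/K}(\beta^{-i}\beta^{\ell})$, where the product is taken in $F^{*}$. For $i\le\ell$ this is immediate from $W_{i\ell}=u_{\ell-i}$. For $\ell<i$ one must reconcile the $\omega$-twist in the definition of $\omega$-circulant with the scalar $\beta^{v}\in K^{*}$ that appears when the exponent $(\ell-i)\bmod v$ is replaced by $\ell-i$ computed in $\mathbf{Z}/(q^{m+1}-1)\mathbf{Z}$; the relation $\omega^{v}=\beta$ is chosen precisely so that these two scalars cancel and the identification holds uniformly. Setting $\phi_{i}(x):=Tr_{F/K}(\beta^{-i}x)$, row $i$ of $W$ is then the evaluation of the $K$-linear functional $\phi_{i}$ on the transversal $\{\beta^{\ell}:0\le\ell<v\}$. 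For $0\le i<j<v$ and $a\in K^{*}$, the equation $W_{i\ell}W_{j\ell}^{-1}=a$ together with $W_{j\ell}\ne 0$ translates into
\[
Tr_{F/K}\bigl((\beta^{-i}-a\beta^{-j})\beta^{\ell}\bigr)=0 \quad\text{and}\quad Tr_{F/K}(\beta^{-j}\beta^{\ell})\ne 0.
\]

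Since $0<j-i<v$ gives $\beta^{j-i}\notin K$, the element $\beta^{-i}-a\beta^{-j}$ is not a $K$-multiple of $\beta^{-j}$; by non-degeneracy of the trace form on $F$, the two $K$-linear functionals $x\mapsto Tr_{F/K}(\beta^{-j}x)$ and $x\mapsto Tr_{F/K}((\beta^{-i}-a\beta^{-j})x)$ therefore have distinct $K$-hyperplane kernels. Their intersection is a codimension-two $K$-subspace of $F$ containing $(q^{m-1}-1)/(q-1)$ projective points, while the first kernel alone contains $(q^{m}-1)/(q-1)$ projective points. The number of columns $\ell$ for which the ratio equals $a$ is then
\[
\frac{q^{m}-1}{q-1}-\frac{q^{m-1}-1}{q-1}=q^{m-1},
\]
independently of $a$, so $S_{ij}$ contains each element of $K^{*}$ exactly $q^{m-1}$ times, giving $\lambda=(q-1)q^{m-1}=q^{m}-q^{m-1}$, matching the claimed classical parameters. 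The main obstacle is the scalar bookkeeping underlying the identification $W_{i\ell}=Tr_{F/K}(\beta^{-i}\beta^{\ell})$: once the $\omega$-twist is correctly absorbed by the defining relation for $\omega$, the rest of the argument reduces to a clean dimension count on hyperplanes in $F$.
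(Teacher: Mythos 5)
The paper states this theorem without proof (it is quoted from Jungnickel--Tonchev), so there is no in-paper argument to compare against. Judged on its own terms, your two counting steps are correct and are the standard ones: the powers $\beta^{0},\dots,\beta^{v-1}$, $v=(q^{m+1}-1)/(q-1)$, form a transversal of the one-dimensional $K$-subspaces of $F$; the trace kernel is a $K$-hyperplane meeting $(q^{m}-1)/(q-1)$ of them, giving $k=q^{m}$; and for $i<j$ and $a\in K^{*}$ the hyperplane-minus-codimension-two count gives $q^{m-1}$ columns per ratio, hence $\lambda=(q-1)q^{m-1}=q^{m}-q^{m-1}$.

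The genuine gap is the step you explicitly defer. The identification $W_{i\ell}=Tr_{F/K}(\beta^{-i}\beta^{\ell})$ is the only place where the hypothesis relating $\omega$ to $\beta$ enters, and the relation you quote, $\omega^{v}=\beta$, cannot hold: $\omega$ lies in $\gf(q)^{*}$, so $\omega^{v}\in\gf(q)^{*}$, whereas $\beta$ generates $\gf(q^{m+1})^{*}$. (This is a typo in the theorem statement; the intended hypothesis is $\omega=\beta^{-v}$, and Example \ref{Ex2} is consistent with exactly that choice.) Carrying out the bookkeeping: for $\ell<i$ one has
\[
W_{i\ell}=\omega\,Tr_{F/K}(\beta^{\ell-i+v})=\omega\beta^{v}\,Tr_{F/K}(\beta^{-i}\beta^{\ell}),
\]
since $\beta^{v}\in K^{*}$, so the identification holds uniformly if and only if $\omega\beta^{v}=1$. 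This is not cosmetic: if $\omega\beta^{v}=c\neq1$, then for the columns with $i\le\ell<j$ the ratio $W_{i\ell}W_{j\ell}^{-1}$ acquires an extra factor $c^{-1}$ that the remaining columns do not, and the multiset $S_{ij}$ is no longer balanced --- already in the $6\times6$ example, replacing $\omega=\beta^{-6}$ by $\omega=\beta^{6}$ destroys the BGW property. So the ``scalar bookkeeping'' you set aside is precisely the content of the $\omega$-circulant hypothesis and must be written out, with the corrected relation between $\omega$ and $\beta$.
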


BGWs with parameters of the form (\ref{classical-parameters}) are termed as
BGWs with {\it classical parameters}. 
We note that since
 the matrix in Theorem \ref{T1} is $\omega$-circulant, the entire array is generated by a single
row. 

\begin{ex}
\label{Ex2}
  Let $\gf(5) = \{0,1,\omega,\omega^2,\omega^3\}$.
  Applying the construction of Theorem \ref{T1}, we obtain a $\omega$-circulant matrix (\ref{bgw-ex})
  with parameters
  $\bgw(6,5,4;\gf(5)^*)$.
  
 \begin{equation}\label{bgw-ex}
  \left(\begin{array}{cccccc}
          \omega^{3}&1&\omega^{3}&0&1&1\\
          \omega&\omega^{3}&1&\omega^{3}&0&1\\
          \omega&\omega&\omega^{3}&1&\omega^{3}&0\\
          0&\omega&\omega&\omega^{3}&1&\omega^{3}\\
          1&0&\omega&\omega&\omega^{3}&1\\
          \omega&1&0&\omega&\omega&\omega^{3}\\
  \end{array}\right).
 \end{equation}
\end{ex}

For further properties and applications of balanced generalized weighing matrices we refer the reader to
 \citet{jungnickel-kharaghani-bgw}, 
 \citet{combinatorics-of-symmetric-designs},
and \citet{seberry-od-2017}.

If $\A$ is a finite alphabet of $q$ symbols containing the symbol $0$, then a
{\it code} of length $n$ is a nonempty subset of $\A^n$. 
The {\it weight}  $\wt(x)$ of a codeword $x=x_0 \cdots  x_{n-1}$ is 
the numbner of its nonzero components.
The {\it minimum weight} of a code is defined as the smallest nonzero weight of a codeword.
The {\it Hamming distance} between codewords $x$ and $y$ is
given by $\dist(x,y)=\#\{i : x_i \neq y_i\}$, and the {\it minimum distance} of
the code is
$\dist(\code)=\min_{\begin{smallmatrix}x,y \in \code \\ x \neq
                      y\end{smallmatrix}}\dist(x,y)$.
                  If $\#\code=M$ and $d=\dist(\code)$, then one writes $\code$
                  is an
                  $(n,M,d)_q$-code. 
                  
 A code is {\it equidistant} if there is a constant $d$ such that
 $\dist(x,y)=d$, for all distinct $x,y \in \code$. A code $\code$ is {\it
   constant weight} if there is a constant $w$ such that $\wt(x)=w$, for
 all $x \in \code$.
 To
emphasize the property of being constant weight, we write $\code$ is an
$(n,M,d,w)_q$-code.

Given the parameters $n$, $d$, $q$ and $w$, it is a
fundamental problem to maximize the  code size $M$. 
We denote by  $\M_q(n,d)$ (resp. $\M_q(n,d,w)$) the maximum size of a code
(resp. constant weight code) of length $n$ over and alphabet of size $q$.

Usually, one uses combinatorial arguments to derive an upper bound for
$\M_q(n,d)$. A lower bound $M \leqq \M_q(n,d)$ is exhibited whenever one
actually constructs an $(n,M,d)_q$-code. In the event these  upper and
lower bounds coincide, we have found $\M_q(n,d)$ and the resulting code is
said to be {\it optimal}. 

The following  upper  bounds will be most useful to us (see \citet{pless-book}
and \citet{tonchev-combinatorial-configurations} for details). 
\begin{thm}\label{thm-johnson-bound-1}
(\emph{Restricted Johnson Bound}):
\begin{equation}\label{restricted-johnson-bound}
 \M_q(n,d,w) \leqq \left\lfloor \frac{nd(q-1)}{qw^2-2(q-1)nw+nd(q-1)} \right\rfloor
\end{equation}
provided $qw^2-2(q-1)nw+nd(q-1)>0$.
\end{thm}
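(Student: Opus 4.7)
The plan is to establish the bound by double counting certain quantities attached to pairs of codewords. Let $\code$ be an $(n,M,d,w)_q$-code. Given two distinct $x,y \in \code$, I would partition the $n$ coordinates according to the pair $(x_j, y_j)$: let $a_{xy}$ be the number of positions where $x_j = y_j \neq 0$, let $b_{xy}$ be the number of positions where both entries are nonzero but distinct, and let $c_1, c_2$ be the numbers of positions where exactly one of $x_j, y_j$ is nonzero. The weight equalities $\wt(x) = \wt(y) = w$ force $c_1 = c_2$, and then $\dist(x,y) = b_{xy} + 2c_1 \geq d$ rearranges to
\[
 2a_{xy} + b_{xy} \leq 2w - d.
\]
Summing over all $M(M-1)$ ordered pairs $x \neq y$ gives an upper bound of $M(M-1)(2w-d)$ for the total.

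Next I would compute the same total column-by-column. If $n_j$ denotes the number of codewords with nonzero $j$-th coordinate and $n_{j,s}$ denotes the number with $j$-th coordinate equal to the nonzero symbol $s$, then $\sum_{x \neq y}(a_{xy} + b_{xy}) = \sum_j n_j(n_j-1)$ counts ordered pairs with both entries nonzero in column $j$, while $\sum_{x \neq y} a_{xy} = \sum_j \sum_{s \neq 0} n_{j,s}(n_{j,s}-1)$ counts ordered pairs agreeing on a nonzero symbol in column $j$. Two applications of Cauchy--Schwarz finish the estimate: since $\sum_{s \neq 0} n_{j,s} = n_j$ has $q-1$ terms, one has $\sum_{s \neq 0} n_{j,s}^2 \geq n_j^2/(q-1)$; and since $\sum_j n_j = Mw$, one has $\sum_j n_j^2 \geq M^2 w^2/n$. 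Assembling everything produces the master inequality
\[
 M(M-1)(2w-d) \geq \frac{qM^2 w^2}{n(q-1)} - 2Mw.
\]

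Dividing by $M$ and collecting the terms involving $M$ on one side, the coefficient of $M$ becomes $\frac{qw^2}{n(q-1)} - (2w-d)$; clearing denominators shows this is positive precisely under the hypothesis $qw^2 - 2(q-1)nw + nd(q-1) > 0$. In that case one solves for $M$ and obtains the stated fraction, and the integer floor is justified because $M$ is a positive integer.

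The real substance of the argument lies in the two convexity estimates, especially the first. It is tempting to simply discard $\sum_j \sum_{s \neq 0} n_{j,s}(n_{j,s}-1)$ as nonnegative, but keeping it and applying Jensen's inequality is exactly what supplies the factor $q/(q-1)$ in the denominator and makes the bound nontrivial for $q > 2$; I expect that to be the only step requiring actual thought. The remainder is routine algebraic manipulation.
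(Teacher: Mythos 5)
The paper does not actually prove this theorem; it is quoted from the literature (Huffman--Pless and Tonchev are cited ``for details''), so there is no in-paper argument to compare against. Your proposal is a correct and complete proof of the standard $q$-ary restricted Johnson bound, and the double-counting route you take is the classical one. The accounting checks out: from $\wt(x)=\wt(y)=w$ one gets $c_1=c_2$ and hence $2a_{xy}+b_{xy}=2w-\dist(x,y)\leq 2w-d$; the columnwise identities $\sum_{x\neq y}(a_{xy}+b_{xy})=\sum_j n_j(n_j-1)$ and $\sum_{x\neq y}a_{xy}=\sum_j\sum_{s\neq 0}n_{j,s}(n_{j,s}-1)$ are right; and the two convexity estimates $\sum_{s\neq 0}n_{j,s}^2\geq n_j^2/(q-1)$ and $\sum_j n_j^2\geq M^2w^2/n$ combine to give
\[
M(M-1)(2w-d)\;\geq\;\frac{qM^2w^2}{n(q-1)}-2Mw,
\]
which after dividing by $M$ rearranges to $M\bigl(qw^2-2(q-1)nw+nd(q-1)\bigr)\leq nd(q-1)$, exactly the stated bound under the stated positivity hypothesis. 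Your closing remark is also on point: retaining the agreement term rather than discarding it is what produces the factor $q/(q-1)$ and distinguishes this from the weaker bound one would get by only counting common supports. (Minor cosmetic point: you invoke ``Cauchy--Schwarz'' and ``Jensen'' interchangeably for the first estimate; either justification is fine, but pick one.)
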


\begin{thm}\label{thm-johnson-bound-2}
 (\emph{Unrestricted Johnson Bound}):
  \begin{equation}\label{restricted-johnson-bound-2}
    \M_q(n,d,w) \leqq \left\lfloor \frac{(q-1)n}{w}\M_q(n-1,d,w-1) \right\rfloor.
  \end{equation}
\end{thm}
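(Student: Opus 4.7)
The plan is to establish the bound by a standard shortening / double-counting argument on the nonzero symbol occurrences of codewords.

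Let $\code \subseteq \A^n$ be an $(n,M,d,w)_q$ constant-weight code attaining $M = \M_q(n,d,w)$. I would count the set
$$
T = \bigl\{(x,i,\alpha) : x \in \code,\ 0 \le i < n,\ \alpha \in \A \setminus \{0\},\ x_i = \alpha \bigr\}
$$
in two ways. Counting by codeword: each $x \in \code$ has exactly $\wt(x) = w$ nonzero coordinates, so it contributes $w$ triples to $T$, giving $|T| = Mw$.

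For the second count, for each coordinate $i$ and each nonzero $\alpha \in \A$, let $\code_{i,\alpha} = \{x \in \code : x_i = \alpha\}$, and let $\code'_{i,\alpha}$ be the code obtained by deleting the $i$-th coordinate from every element of $\code_{i,\alpha}$. Then $\code'_{i,\alpha}$ is a code of length $n-1$ in which every codeword has weight exactly $w-1$, since a nonzero entry has been removed from the $w$ nonzero positions. The minimum distance of $\code'_{i,\alpha}$ is still at least $d$, because any two distinct elements of $\code_{i,\alpha}$ already agreed in coordinate $i$ and so deletion of that coordinate leaves the pairwise Hamming distance unchanged. Consequently $|\code_{i,\alpha}| = |\code'_{i,\alpha}| \le \M_q(n-1,d,w-1)$. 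Summing over the $n(q-1)$ choices of the pair $(i,\alpha)$ yields
$$
Mw \;=\; |T| \;=\; \sum_{i,\alpha} |\code_{i,\alpha}| \;\le\; n(q-1)\,\M_q(n-1,d,w-1).
$$
Dividing by $w$ and using that $M$ is an integer allows me to take the floor, which produces the stated inequality.

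The argument is almost entirely mechanical; the only step that demands a brief sanity check is the claim that deleting a coordinate on which two codewords agree preserves both the minimum distance and the constant-weight structure (with the new weight $w-1$), and both properties are immediate by inspection. I do not expect a genuine obstacle, since unlike the Restricted Johnson Bound no non-negativity side condition needs to be verified, and the recursion on $(n,w)$ is handled transparently by the definition of $\M_q$.
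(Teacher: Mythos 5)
Your double-counting argument is correct and is the standard proof of the unrestricted Johnson bound; the paper itself states this theorem without proof, deferring to Huffman--Pless and Tonchev, and the argument in those references is exactly the shortening/counting one you give. The only point you leave implicit is that the deletion map on $\code_{i,\alpha}$ is injective (two codewords agreeing at coordinate $i$ must differ elsewhere, so no collisions occur after puncturing), but this is immediate and does not affect the validity of the bound $|\code_{i,\alpha}| \le \M_q(n-1,d,w-1)$.
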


In Section \ref{S2}, we give the construction of optimal codes meeting the
bounds (\ref{restricted-johnson-bound}) and (\ref{restricted-johnson-bound-2}).

\section{Optimal Codes From Classical BGWs}\label{subsec-bgw-construction} 
\label{S2}

We begin with an example before tending to the main result of the section.

\begin{ex}\label{5-ary-ex}
  Consider again the BGW from  Example \ref{Ex2}, and call this matrix $W$. We form the
  matrices $\omega^iW$, for $i \in \{0,1,2,3\}$ and append the rows of these
  matrices to $W$ in order to form the following code (transposed).
  \[
    \arraycolsep=1.75pt\def\arraystretch{1.0}
    \begin{array}{cccccccccccccccccccccccc}
      \omega^{3}&\omega&\omega&0&1&\omega&1&\omega^{2}&\omega^{2}&0&\omega&\omega^{2}&\omega&\omega^{3}&\omega^{3}&0&\omega^{2}&\omega^{3}&\omega^{2}&1&1&0&\omega^{3}&1\\
      1&\omega^{3}&\omega&\omega&0&1&\omega&1&\omega^{2}&\omega^{2}&0&\omega&\omega^{2}&\omega&\omega^{3}&\omega^{3}&0&\omega^{2}&\omega^{3}&\omega^{2}&1&1&0&\omega^{3}\\
      \omega^{3}&1&\omega^{3}&\omega&\omega&0&1&\omega&1&\omega^{2}&\omega^{2}&0&\omega&\omega^{2}&\omega&\omega^{3}&\omega^{3}&0&\omega^{2}&\omega^{3}&\omega^{2}&1&1&0\\
      0&\omega^{3}&1&\omega^{3}&\omega&\omega&0&1&\omega&1&\omega^{2}&\omega^{2}&0&\omega&\omega^{2}&\omega&\omega^{3}&\omega^{3}&0&\omega^{2}&\omega^{3}&\omega^{2}&1&1\\
      1&0&\omega^{3}&1&\omega^{3}&\omega&\omega&0&1&\omega&1&\omega^{2}&\omega^{2}&0&\omega&\omega^{2}&\omega&\omega^{3}&\omega^{3}&0&\omega^{2}&\omega^{3}&\omega^{2}&1\\
      1&1&0&\omega^{3}&1&\omega^{3}&\omega&\omega&0&1&\omega&1&\omega^{2}&\omega^{2}&0&\omega&\omega^{2}&\omega&\omega^{3}&\omega^{3}&0&\omega^{2}&\omega^{3}&\omega^{2}\\
    \end{array}
  \]
  The entire code is generated by a single
  codeword. In particular, if $(c_0,c_1,c_2,c_3,c_4,c_5)$ is any codeword of the
  above code, then $(\omega c_5,c_0,c_1,c_2,c_3,c_4)$ is also a codeword. We
  call this operation an $\omega$-shift, and note that all possible
  $\omega$-shifts of a given codeword exhaust this code.

  Next, as the code is given by $\omega$-shifts of a single codeword, it is
  clearly of constant weight $w=5$. Furthermore, it is equidistant where the
  distance can be verified to be $d=5$, the length of the codewords is $n=6$,
  and the number of codewords is given by $M=24$.

  It follows that 
  \[
    qw^2 - 2(q-1)nw + nd(q-1) = 5 \cdot 5^2 - 2 \cdot 4 \cdot 6 \cdot 5 + 6
    \cdot 5 \cdot 4 = 5 > 0,
  \]
  and hence
  \[
    \left\lfloor \frac{nd(q-1)}{qw^2 - 2(q-1)nw + nd(q-1)} \right\rfloor =
    \frac{120}{5} = 24.
  \]
  It follows, therefore, that the code above is optimal.
\end{ex}


 {Let $G$ be a finite group not containing the symbol
  0. Let $[v]=\{1,\dots,v\}$, and let Func$([v],G)$  be the collection of all
  functions $f : [v] \rightarrow G$. The symmetric group $\sn$ acts on
  Func$([v],G)$ by $f^\pi(i)=f(\pi^{-1}i)$, where 
  $(f^{\pi_1})^{\pi_2}=f^{\pi_2\pi_1}$. In particular, this defines the
  semidirect product $\fn([v],G) \rtimes \sn$ where
  $(f_1,\pi_1)(f_2,\pi_2)=(f_1f_2^{\pi_1},\pi_1\pi_2)$ for all pairs
  $(f_1,\pi_1),(f_2,\pi_2) \in \fn([v],G) \rtimes \sn$.} 

 {The group $\fn([v],G) \rtimes \sn$ acts  on the $(0,G)$-vectors
  of length $v$ by $(f,\pi)(g_i)=(f(i)g_{\pi^{-1}i})$ where $g \cdot 0 = 0 \cdot
  g = 0$ for every $g \in G$. We can extend the action of $\fn([v],G) \rtimes
  \sn$ to include the
  action of $\aut(G)$ by looking to $\fn([v],G) \rtimes (\aut(G) \times \sn)$ where
  $(f_1,\alpha_1,\pi_1)(f_2,\alpha_2,\pi_2)=(f_1f_2^{(\pi_1,\alpha_1)},\alpha_1\alpha_2,\pi_1\pi_2)$,
  with $f^{(\pi,\alpha)}(i)=[f(\pi^{-1}i)]^\alpha$, for every pair
  of triples $(f,\alpha,\pi),(h,\beta,\sigma) \in \fn([v],G) \rtimes (\aut(G)
  \times \sn)$. Note that we understand
  $f^{(\pi_2\pi_1,\alpha_2,\alpha_1)}=(f^{(\pi_1,\alpha_1)})^{(\pi_2,\alpha_2)}$. 
  The action then becomes $(f,\alpha,\pi)(g_i) =
  (f(i)g_{\pi^{-1}i}^{\alpha^{-1}})$.}

 {We then extend this action to the collection of $v \times v$
  $(0,G)$-matrices by taking
  $(f,h,\alpha,\pi,\sigma)(g_{ij})=(f(i)g_{\pi^{-1}i,\sigma^{-1}j}^{\alpha^{-1}}h(j))$
  for $(f,h,\alpha,\pi,\sigma) \in (\fn([v],G) \times \fn([v],G)) \rtimes
  (\aut(G) \times \sn \times \sn)$, where
  \[
    (f_1,h_1,\alpha_1,\pi_1,\sigma_1)
    (f_2,h_2,\alpha_2,\pi_2,\sigma_2)=
    (f_1f_2^{(\pi_1,\alpha_1)},h_1h_2^{(\sigma_1,\alpha_1)},\alpha_1\alpha_2,\pi_1\pi_2,\sigma_1\sigma_2).
  \]
  This amounts to having $\fn([v],G) \rtimes (\aut(G) \times \sn)$ act on the
  rows of matrix and $\fn([v],G) \rtimes \sn$ act on its columns.\footnote{We
    can realize the action of this group concretely by first applying a group
    automorphism to the entries of the matrix, and then pre- and
    post-multiplying by a pair of monomial matrices.}
}

 {It is clear that if an orbit under this action contains a
$\bgw(v,k,\lambda;G)$, then every matrix in this orbit is also a BGW with the
same parameters. This fact was used to great effect by \citet{mathon-signings}
when the authors enumerated equivalence classes of small parameter BGWs.}

 {Our purpose for introducing equivalence of BGW matrices is the following.
  Every orbit under the action of $(\fn([v],G) \times \fn([v],G)) \rtimes
  (\aut(G) \times \sn \times \sn)$ that contains a BGW also
  contains a BGW with same parameters of the form}
\[
   {
    \begin{pmatrix}
      \mathbf{0} & R \\ \mathbf{1} & D
    \end{pmatrix}.
  }
\]
 {Such a BGW is said to be in normal form. Note that $R$ is a {\it
    generalized Bhaskar Rao design} (a GBRD for short), and $D$ is a $c$-GBRD
  (see \citet{handbook} for definitions).
  We say that $R$ and $D$ are a {\it residual-} and a {\it derived-part},
  respectively, of the normalized BGW. This equivalence is useful and has been
  recently used by the current authors \citep{new-bw,w-mat-construct} in
  order to construct new families of (balanced) weighing matrices. Outside of
  these and the current applications, this structural result appears not to have
  been utilized before.}

 {Finally, we note that if $W$ is a $\bgw(v,k,\lambda; G)$, and if $\alpha
  : G \rightarrow H$ is a group epimorphism, then $(W_{ij}^\alpha)$ is a
  $\bgw(v,k,\lambda; H)$ where we extend $\alpha$ to $G \cup \{0\}$ by defining
  $0^\alpha=0$.}


\begin{thm}\label{thm-main}
  Let $q$ be an odd prime power and $g$ a divisor of $q-1$. For every $m>1$, the
  following holds:
  \begin{align*}
    \M_{g+1}\left( \frac{q^{m+1}-1}{q-1}-1,d,q^m-1 \right) &= q^m, \quad\text{and}\\
    \M_{g+1}\left( \frac{q^{m+1}-1}{q-1},d,q^m \right) &= g\frac{q^{m+1}-1}{q-1},
  \end{align*}
  where
  \[
    d=2q^m-\frac{(g+1)(q^m-q^{m-1})}{g}.
  \]
  Moreover, the optimal code meeting the second bound 
  is generated entirely by the $\omega^{\frac{q-1}{g}}$-shifts of a single codeword.
\end{thm}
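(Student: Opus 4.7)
The plan is to construct explicit optimal codes meeting both claimed values and then to match these from above using the two Johnson bounds from Section~\ref{sec-intro}. First I would pass the BGW $W$ of Theorem~\ref{T1} through the quotient epimorphism $\phi\colon\gf(q)^{*}\to H$, where $H$ is the quotient of $\gf(q)^{*}$ by its unique subgroup of index $g$ (so $|H|=g$); extended by $\phi(0)=0$, the matrix $W'=(\phi(W_{ij}))$ is a $\bgw(v,q^m,q^m-q^{m-1};H)$ with the same parameters, where $v=(q^{m+1}-1)/(q-1)$. Writing $\eta=\phi(\omega)$, a generator of $H$, I would take $\code$ to be the code over the alphabet $H\cup\{0\}$ of size $g+1$ consisting of the $gv$ vectors $\eta^{i}W'_{r,*}$ for $0\le i<g$ and $0\le r<v$.

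Every row of $W'$ has exactly $q^m$ nonzero entries, so $\code$ is of constant weight $q^m$. To compute the minimum distance I would compare two codewords $\eta^{i}W'_{r,*}$ and $\eta^{j}W'_{s,*}$. If $r=s$ and $i\ne j$, they disagree at every nonzero position of row~$r$, yielding distance $q^m$. If $r\ne s$, the defining property of a BGW provides exactly $(q^m-q^{m-1})/g$ columns where both entries are nonzero with ratio $\eta^{j-i}$, while inclusion--exclusion gives $v-2q^m+(q^m-q^{m-1})=(q^{m-1}-1)/(q-1)$ columns where both entries vanish; subtracting the total agreements from $v$ yields distance
\[
  q^m+q^{m-1}-(q^m-q^{m-1})/g=2q^m-(g+1)(q^m-q^{m-1})/g=d.
\]
Since $g\le q-1$ forces $d\le q^m$, the minimum distance of $\code$ is exactly $d$.

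For the first equality of the theorem I would shorten $\code$ by retaining only those codewords whose first entry equals $1\in H$ and then deleting that coordinate: for each of the $q^m$ rows $r$ with $W'_{r,0}\ne 0$, exactly one index $i$ satisfies $\eta^{i}W'_{r,0}=1$, so the shortened code has $q^m$ codewords of length $v-1$, constant weight $q^m-1$, and minimum distance $d$ (the deleted coordinate agreed across every retained pair). A direct substitution of $(n,d,w)=(v-1,d,q^m-1)$ with alphabet size $g+1$ into the restricted Johnson bound~(\ref{restricted-johnson-bound}) evaluates to exactly $q^m$, giving $\M_{g+1}(v-1,d,q^m-1)=q^m$. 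The second equality is then immediate from the unrestricted Johnson bound~(\ref{restricted-johnson-bound-2}): $\M_{g+1}(v,d,q^m)\le\lfloor (gv/q^m)\cdot q^m\rfloor=gv$, matched by $|\code|=gv$.

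The main obstacle is the algebraic verification that the restricted Johnson bound evaluates to exactly $q^m$ on the shortened parameters; this is a moderately intricate simplification that factors through the identities $(q-1)v=q^{m+1}-1$ and $gd=q^{m-1}\bigl[(g-1)q+(g+1)\bigr]$, after which the numerator and denominator of the bound share a common factor leaving precisely $q^m$. The remaining $\omega^{(q-1)/g}$-shift assertion is a read-off from the construction: the $\omega$-circulant form of $W$ makes successive rows of $W'$ into one-position shifts whose wrap-around is multiplied by $\eta$, and iterating the $\omega^{(q-1)/g}$-shift (of order dividing $g$ in $H$) together with the cyclic length $v$ recovers the $gv$ members of $\code$ from a single seed row.
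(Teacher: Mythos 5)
Your proposal is correct and follows essentially the same route as the paper: push the $\omega$-circulant BGW of Theorem~\ref{trace-construction} through the epimorphism onto a group of order $g$, take the $g$ multiples of its rows to meet the unrestricted Johnson bound, and use the restricted Johnson bound on the length-$(v-1)$, weight-$(q^m-1)$ subcode to pin down $q^m$. Your explicit shortening (keep rows with first entry normalized to $1$, delete that coordinate) is just the paper's derived part $D$ of the normalized BGW described concretely, and your distance computation supplies the detail the paper dismisses as ``an easy calculation.''
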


\begin{proof}
  By Theorem \ref{trace-construction}, for every prime power $q$ and integer
  $m>1$, there is an $\omega$-circulant $\bgw$, say $W$, with parameters
  (\ref{classical-parameters}) over $\gf(q)^*$. From the preceding discussion,
  there is an equivalent BGW of the form
  \[
    \begin{pmatrix}
      \mathbf{0} & R \\ \mathbf{1} & D
    \end{pmatrix}.
  \]
Let $C$ be a subgroup of $\gf(q)^*$ of order $g$. The mapping $\omega \mapsto
\omega^{\frac{q-1}{g}}$ induces an epimorphism $\alpha : \gf(q)^* \rightarrow C$.
We now show that the rows of the $q^m\times (\frac{q^{m+1}-1}{q-1}-1)$ matrix
$(D_{ij}^\alpha)$, with $0^\alpha=0$, form the desired $(g+1)$-ary optimal code
meeting the first in the statement of the theorem.

That the distance is given as stated is an easy calculation. The restriction of
Johnson's bound is then seen to be 
\[
  (qg-q+g+1)\frac{q^m-1}{q-1}.
\]
Since $qg-q+g+1=(g-1)(q+1)+2>0$, we can apply the restricted Johnson bound
(\ref{restricted-johnson-bound}) to obtain
 \[
    \M_{g+1}\left( \frac{q^{m+1}-1}{q-1}-1,d,q^m-1 \right) =q^m,
  \]
  which is the number of rows in $(D_{ij}^\alpha)$.

Applying the unrestricted Johnson bound (\ref{restricted-johnson-bound-2}), one
finds that
 \[
    \M_{g+1}\left( \frac{q^{m+1}-1}{q-1},d,q^m \right) \le \frac{gn}{w} 
    \M_{g+1}\left( \frac{q^{m+1}-1}{q-1}-1,d,q^m-1 \right)
    =g\left(\frac{q^{m+1}-1}{q-1}\right). 
  \]

  Taking $W'=(W_{ij}^\alpha)$, we have at once that $W'$ is a BGW with classical
  parameters over $C$. Taking the rows of the matrices $W'$, $\omega'W'$,
  $\dots$, $(\omega')^{g-1}W'$, where $\omega'=\omega^{\frac{q-1}{g}}$, to be the
  codewords of $\code$, we have have that $\code$ is a code with the required
  parameters meeting the above bound. Moreover, all of the $\omega'$-shifts of
  the first row of $W'$ generates the entire code. This completes the proof.
 \end{proof}
 
 An important Corollary to the main result of the paper follows.

\begin{cor}\label{thm-main2}
  Let $q$ be an odd prime power. For every $m>1$, there is an optimal equidistant,
  constant weight $q$-ary code with the parameters
  \begin{equation}
    \left(
      \frac{q^{m+1}-1}{q-1}, q^{m+1}-1, q^m, q^m
    \right)
  \end{equation}
  over the alphabet $\gf(q)$, whereupon
  \[
    \M_q\left( \frac{q^{m+1}-1}{q-1},q^m,q^m \right) = q^{m+1}-1.
  \]
  Moreover, the codewords can be assumed to be generated entirely by all the possible
  $\omega$-shifts of a single codeword.
\end{cor}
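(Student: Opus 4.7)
The plan is to derive the corollary as a specialization of Theorem \ref{thm-main} at $g = q-1$, so that $g+1 = q$ and the alphabet becomes $\gf(q)$ itself. Under this choice, the distance formula collapses to
\[
d = 2q^m - \frac{q(q^m - q^{m-1})}{q-1} = 2q^m - q^m = q^m,
\]
which matches both the target distance and the target weight $w = q^m$. Substituting into the second identity of Theorem \ref{thm-main} immediately gives
\[
\M_q\!\left(\frac{q^{m+1}-1}{q-1},\,q^m,\,q^m\right) = (q-1)\cdot\frac{q^{m+1}-1}{q-1} = q^{m+1}-1.
\]
Moreover, since $(q-1)/g = 1$, the $\omega'$-shifts of Theorem \ref{thm-main} become ordinary $\omega$-shifts, which takes care of the final sentence of the corollary.

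The witnessing code is the list of rows of $W,\omega W,\ldots,\omega^{q-2}W$, where $W$ is the $\omega$-circulant BGW of Theorem \ref{T1} over $\gf(q)^*$. It remains to verify the equidistance claim, which is not already explicit in Theorem \ref{thm-main}. My plan is to invoke the BGW axiom directly. Let $c_1 = \omega^i \cdot (\text{row } r \text{ of } W)$ and $c_2 = \omega^j \cdot (\text{row } s \text{ of } W)$ be distinct codewords, and split on whether $r = s$. If $r = s$ and $i \neq j$, then $c_1$ and $c_2$ differ at all $k = q^m$ nonzero positions of the common row and agree at all zero positions, giving distance $q^m$. If $r \neq s$, then exactly one of $c_1,c_2$ is nonzero at $2(k-\lambda) = 2q^{m-1}$ positions, while both are nonzero at $\lambda = q^m - q^{m-1}$ positions. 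The BGW axiom says the multiset of ratios $W_{r\ell}/W_{s\ell}$ over the overlap contains each element of $\gf(q)^*$ exactly $\lambda/(q-1) = q^{m-1}$ times, so precisely $q^{m-1}$ of the overlap positions satisfy $W_{r\ell}/W_{s\ell} = \omega^{j-i}$ (agreement), and the remaining $\lambda - q^{m-1} = q^m - 2q^{m-1}$ give disagreement. Summing $2q^{m-1} + (q^m - 2q^{m-1}) = q^m$ again recovers the distance.

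The only step carrying content beyond routine substitution is the case analysis for equidistance, and even that reduces to one invocation of the BGW axiom; I do not expect any serious obstacle. Since every pair of distinct codewords has distance exactly $q^m = d$, the code is equidistant, and the corollary follows.
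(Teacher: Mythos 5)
Your proposal is correct and follows essentially the same route as the paper: specialize Theorem \ref{thm-main} at $g=q-1$, observe that $d$ collapses to $q^m$, $M=q^{m+1}-1$, and $\omega'=\omega$. Your additional case analysis via the BGW axiom (distance $k=q^m$ when the rows coincide, and $2(k-\lambda)+(\lambda-\lambda/(q-1))=q^m$ otherwise) correctly supplies the equidistance verification that the paper only asserts (in the corollary statement and the subsequent Remark) without writing out.
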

\begin{proof}
Let $g=q-1$ in Theorem \ref{thm-main}. Then $d=q^m$ and $M=q^{m+1}-1$ leading to
the conclusion that
\[
  \M_q\left( \frac{q^{m+1}-1}{q-1},q^m,q^m \right) = q^{m+1}-1,
\]
as desired.
\end{proof}

A generalized version of the main result in \citet{opt-ter} now follows from
Theorem \ref{thm-main}.

\begin{cor}
  Let $q$ be an odd prime power. For every $m>1$, there is an optimal bidistant,
  constant weight ternary code with the parameters
  \begin{equation}
    \left(
      \frac{q^{m+1}-1}{q-1}, 2\frac{q^{m+1}-1}{q-1},\frac{q^{m-1}(q+3)}{2} , q^m
    \right).
  \end{equation}
  Therefore,
  \[
    \M_q\left( \frac{q^{m+1}-1}{q-1},\frac{q^{m-1}(q+3)}{2},q^m \right)=
    2\left(\frac{q^{m+1}-1}{q-1}\right).
  \]
  Moreover, the code can be assumed to be generated entirely by the
  negashifts of a single code word. 
\end{cor}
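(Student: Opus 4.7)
The plan is to specialize Theorem \ref{thm-main} to the case $g=2$, which is permissible since $q$ is odd and hence $2\mid q-1$. First I would substitute $g=2$ into each formula of Theorem \ref{thm-main} and verify by routine algebra that the alphabet size becomes $g+1=3$, the weight is $w=q^m$, the distance simplifies to $d=2q^m-\frac{3(q^m-q^{m-1})}{2}=\frac{q^{m-1}(q+3)}{2}$, and the optimal code size is $M=gn=\frac{2(q^{m+1}-1)}{q-1}$. Optimality and the single-orbit structure then follow at once from Theorem \ref{thm-main}.

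Next I would identify the relevant subgroup and shift. The unique subgroup $C$ of order $2$ in $\gf(q)^*$ is $\{1,-1\}$, so $\{0\}\cup C=\{-1,0,1\}$ is ternary as required. Since $\omega$ is primitive, $\omega^{(q-1)/2}=-1$, so the epimorphism $\alpha:\gf(q)^*\to C$ from Theorem \ref{thm-main} sends $\omega\mapsto -1$, and the $\omega^{(q-1)/2}$-shift featured there is exactly the negashift $(c_0,\ldots,c_{n-1})\mapsto(-c_{n-1},c_0,\ldots,c_{n-2})$ named in the statement.

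The one point requiring independent verification is the bidistance property. Here I would use the BGW property of $W$ directly: for any two distinct rows $u,v$ of $W$, the ratios $u_\ell v_\ell^{-1}$ over the $\lambda=q^{m-1}(q-1)$ positions where both entries are nonzero cover each element of $\gf(q)^*$ exactly $q^{m-1}$ times. Since $\ker\alpha$ has index $2$ in $\gf(q)^*$, applying $\alpha$ splits those ratios evenly, giving $\frac{q^{m-1}(q-1)}{2}$ agreements and $\frac{q^{m-1}(q-1)}{2}$ sign reversals. Adding the $2(k-\lambda)=2q^{m-1}$ positions where exactly one of the two rows vanishes yields the Hamming distance $\frac{q^{m-1}(q+3)}{2}=d$ between any two distinct rows of $W'=(W_{ij}^\alpha)$, and the same bookkeeping (with the roles of the two halves interchanged) gives the same distance $d$ between a row $r_i$ of $W'$ and the negative $-r_j$ of a different row. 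The only remaining pair type is a codeword $r_i$ against its negative $-r_i$, whose Hamming distance is just the weight $q^m$. Hence precisely two distances occur, namely $q^m$ and $d$, confirming bidistance.

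The only step requiring work beyond specialization is the even-splitting argument in the last paragraph; it is a short counting argument that rests entirely on the fact that $\ker\alpha$ has index $2$ in $\gf(q)^*$, and no further combinatorial input is needed.
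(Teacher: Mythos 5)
Your proposal is correct and follows the same route as the paper: specialize Theorem~\ref{thm-main} to $g=2$, i.e.\ to the subgroup $C=\{-1,1\}$ of $\gf(q)^*$, which exists because $q$ is odd. Your additional counting argument for the bidistance property (the even split of the ratio multiset under the index-$2$ kernel of $\alpha$, giving the two distances $\frac{q^{m-1}(q+3)}{2}$ and $q^m$) is sound and in fact supplies a verification that the paper only asserts in a remark without proof.
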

\begin{proof}
Take $C=\{-1,1\}$ is a proper subgroup of $\gf(q)^*$ since $q$ is an odd prime
power. The result now follows from Theorem \ref{thm-main}.
\end{proof}

\begin{re}
 For the proper subgroup $C$ in Theorem \ref{thm-main}, the minimum distance is
 a function of the size of the subgroup, and the generated codes are bidistant.
 However, for the case where $C=\gf(q)^*$ the code is equidistant. 
\end{re}

\begin{ex}\label{nega-example}
By taking $C_k=\{-1,1\}$ in Example \ref{bgw-ex} and applying Theorem
\ref{thm-main}, we have the optimal ternary code with parameters $(6,12,4,5)$
generated by the negashifts of the first codeword in  
\[
  \begin{matrix}
    - & + & - & 0 & + & + \\ 
    - & - & + & - & 0 & + \\
    - & - & - & + & - & 0 \\ 
    0 & - & - & - & + & - \\
    + & 0 & - & - & - & + \\ 
    - & + & 0 & - & - & - \\
    + & - & + & 0 & - & - \\
    + & + & - & + & 0 & - \\
    + & + & + & - & + & 0 \\
    0 & + & + & + & - & + \\
    - & 0 & + & + & + & - \\
    + & - & 0 & + & + & +
  \end{matrix}.
\]
\end{ex}

\section{Covering Arrays and Mutually Suitable Latin Squares}\label{sec-ca-msls}

{A configuration closely related to error-correcting codes is that of
  orthogonal arrays. Given a $q$-alphabet $\A$, an {\it orthogonal array} is an
  $N \times k$ array with entries from $\A$ such that every element of $\A^t$
  appears some constant number $\lambda$ times in every $N \times t$ subarray.
  We say that such an array is an $\oa_q(N,k,t,\lambda)$. The standard reference
for these objects is the monograph by \citet{sloane-oas}.}

{From the previous section, we know that for any divisor $g$ of
  $q-1$, there is an optimal constant weight code with the parameters}
\[
  {
    \left( \frac{q^{m+1}-1}{q-1}, g\frac{q^{m+1}-1}{q-1},
      2q^m-\frac{(g+1)(q^m-q^{m-1})}{g}, q^m \right)_{g+1}
  }
\]
 {that is generated entirely by the $\omega$-shifts of a single codeword.
  It isn't difficult to see that by taking $g=q-1$ and appending the all zeros
  codeword to this code, the result is an $\oa_q$ with the parameters}
\[
   {
    \left(
      q^{m+1}, \frac{q^{m+1}-1}{q-1}, 2, 1
    \right).
  }
\]

 {In general, however, the codes of the previous section do not always
  yield an OA; rather, we get the following generalization. Continuing to let
  $\A$ be a $q$-alphabet, a {\it covering array} is an $N \times k$ array with
  entries from $\A$ such that every element of $\A^t$ appears at least $\lambda$
  times in every $N \times t$ subarray. We say that such an array is a
  $\ca_q(N,k,t,\lambda)$. The codes of the previous section then give the
  following.} 

 {
\begin{thm}
  Let $q$ be a prime power, and let $g$ be a divisor of $q-1$. For every
  $m>1$, there is a
  \[
      \ca_{g+1}\left( g\frac{q^{m+1}-1}{q-1}+1,\frac{q^{m+1}-1}{q-1},2,1 \right).
  \]
  Moreover, the rows of the array are closed under the operation of
  $\omega$-shifting, and all of the nonzero rows are entirely generated by
  the $\omega$-shifts of a single row.
\end{thm}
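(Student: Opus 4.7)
The plan is to append the all-zero codeword to the code $\code$ of Theorem~\ref{thm-main}, obtaining an $(gk+1)\times k$ array $A$ with $k=(q^{m+1}-1)/(q-1)$, and to verify that $A$ meets the three requirements: it is a covering array of strength $2$ and coverage $1$ over $\{0\}\cup C$, where $C=\langle\omega^{(q-1)/g}\rangle$; its rows are closed under $\omega^{(q-1)/g}$-shifting; and its nonzero rows form a single $\omega^{(q-1)/g}$-orbit. The last two items are immediate from Theorem~\ref{thm-main} together with the observation that the zero row is fixed by any shift, so the real work lies in the covering property.

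For the covering property, I would fix distinct columns $i\neq j$ and handle each pair $(a,b)\in(\{0\}\cup C)^2$ by cases on its zero-pattern. The pair $(0,0)$ is covered by the appended zero row. For $(a,b)=(0,c)$ or $(c,0)$ with $c\in C$, the key input is that $|W'|$ is the incidence matrix of the symmetric $2$-$(k,q^m,q^m-q^{m-1})$ design, which has no repeated blocks; thus there is some row of $W'$ with a zero in column $i$ and a nonzero in column $j$ (and symmetrically), and the $g$ distinct $\omega^{(q-1)/g}$-shifts of that row sweep the nonzero coordinate through every element of $C$. For $(a,b)=(c,c')\in C\times C$, since an $\omega^{(q-1)/g}$-shift multiplies both coordinates by the same element of $C$, the pair $(c,c')$ appears in $A$ if and only if some row of $W'$ satisfies $W'_{r,i}/W'_{r,j}=c/c'$ with both entries nonzero.

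The substantive obstacle is therefore to show that every element of $C$ appears as a column quotient of $W'$, since the BGW axiom only controls \emph{row} quotients. To overcome this I plan to exploit the $\omega$-circulant structure of $W$: with first row $(\alpha_0,\dots,\alpha_{k-1})$, both the row-quotient multiset for rows $0,\tau$ and the column-quotient multiset for columns $i,i+\tau$ reduce, up to reciprocation and multiplication by $\omega$, to the same underlying multiset $\{\alpha_\ell/\alpha_{\ell+\tau \bmod k}\}$ taken over valid indices. Since reciprocation and translation in $\gf(q)^*$ preserve uniform distribution, the BGW row-balance of $W$ forces column-balance as well, with each element of $\gf(q)^*$ occurring exactly $q^{m-1}$ times. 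Applying the epimorphism $\alpha$ then shows each element of $C$ occurs $q^{m-1}(q-1)/g\ge 1$ times as a column quotient of $W'$, closing the last case and completing the covering argument.
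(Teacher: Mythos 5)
Your overall strategy --- append the all-zero word to the code $\code$ of Theorem~\ref{thm-main} and verify the covering property pair-by-pair according to the zero-pattern of the target pair $(a,b)$ --- is exactly what the paper intends; in fact the paper states this theorem with no proof at all beyond the remark that the codes of the previous section ``give the following,'' so you are supplying detail the authors omit entirely. Your case analysis is sound: $(0,0)$ comes from the appended row; $(0,c)$ and $(c,0)$ come from the dual $2$-design property of the support of $W'$ (two columns of the symmetric $2$-$(v,q^m,q^m-q^{m-1})$ design meet in $q^m-q^{m-1}$ rows, leaving $q^{m-1}>0$ rows supported on one column but not the other --- note this is the counting you actually need, not merely the absence of repeated blocks); and you correctly isolate the genuine issue for $(c,c')$, namely that the BGW axiom constrains \emph{row} quotients $W_{i\ell}W_{j\ell}^{-1}$ while the covering property needs every element of $C$ to arise as a \emph{column} quotient.

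The one step to tighten is the transfer from row balance to column balance. As written, ``up to reciprocation and multiplication by $\omega$'' does not suffice: the power of $\omega$ decorating each term is index-dependent, and multiplying the members of a uniformly distributed multiset by \emph{different} group elements can destroy uniformity. The claim is nevertheless true, and there are two clean repairs. (i) Do the index bookkeeping: writing $W_{ij}=\omega^{[j<i]}\alpha_{j-i}$, the column-quotient $W_{\ell 0}W_{\ell\tau}^{-1}$ equals $\omega^{e_u}\alpha_{u-\tau}\alpha_u^{-1}$ with $u=\tau-\ell \bmod v$, and one checks $e_u=[u<\tau]$, which is precisely the twist appearing in the term-by-term reciprocal of the row multiset $\{W_{0\ell}W_{\tau\ell}^{-1}\}$; hence the two multisets are literally equal (the nonzero-support conditions also coincide), and a further shift reduces a general column pair $(i,i+\tau)$ to $(0,\tau)$, so uniformity transfers. (ii) More simply, invoke the standard fact that the conjugate transpose of a $\bgw(v,k,\lambda;G)$ is again a $\bgw(v,k,\lambda;G)$ (the group-ring identity $WW^*=kI$ modulo the ideal generated by $\sum_{g\in G}g$ forces $W^*W=kI$ there as well, and the $0$--$1$ projection is a symmetric design), which yields column balance for any BGW without using circulance. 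With either repair your argument is complete; the closure and single-orbit claims are, as you say, immediate from Theorem~\ref{thm-main} and the fact that the zero row is fixed by $\omega^{(q-1)/g}$-shifting.
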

}

 {
  \begin{ex}
    Inspecting Example \ref{nega-example} shows that together with the all zeros
    codeword, it is indeed a $\ca_3(12,6,2,1)$.
  \end{ex}
}

 {We recall that a latin square over a $q$-alphabet $\A$ is a $q \times q$
  matrix whose rows and columns are permutations of $\A$. Two latin squares on
  the $q$-alphabet $\A$ are said to be {\it suitable} in the event that in
  the superimposition of one matrix over the other each row contains exactly one
  entry whose abscissa and ordinate coincide. A collection of latin squares
  over a $q$-alphabet are mutually suitable if every pair of distinct squares in
  the collection are suitable. It was shown in \citet{unbiased-real} that
  mutually orthogonal and mutually suitable latin squares are in bijective
  correspondence, hence the number of mutually suitable latin squares on a
  $q$-alphabet is bounded above by $q-1$.} 

 {We can use our construction of $\omega$-circulant $\oa_q(q^2,q+1,2,1)$s
  to produce maximal families of mutually suitable latin squares as expressed in
  the following example.}

 {
  \begin{ex}
    Consider again the $(6,24,5,5)_5$-code constructed in Example
    \ref{5-ary-ex}. Adding the all zeros codeword, and permuting the order of
    the codewords, we obtain the following $\oa_5(25,6,2,1)$ (transposed).
    \begin{small}
      \[
        \arraycolsep=1.75pt\def\arraystretch{1.0}
        \begin{array}{ccccc|ccccc|ccccc|ccccc|ccccc}
          0&0&0&0&0&\omega&\omega&\omega&\omega&\omega&\omega^2&\omega^2&\omega^2&\omega^2&\omega^2&\omega^3&\omega^3&\omega^3&\omega^3&\omega^3&1&1&1&1&1\\ \hline
          0&\omega&\omega^2&\omega^3&1&\omega^3&\omega&1&0&\omega^2&1&\omega^2&\omega&0&\omega^3&1&\omega&\omega^3&\omega^2&0&0&\omega&\omega^2&1&\omega^3\\
          0&\omega&\omega^2&\omega^3&1&1&\omega^3&0&\omega^2&\omega&\omega&1&0&\omega^3&\omega^2&\omega^3&\omega^2&\omega&0&1&\omega&1&\omega^3&\omega^2&0\\
          0&\omega^3&1&\omega&\omega^2&\omega^3&1&\omega&\omega^2&0&1&\omega&\omega^2&\omega^3&0&0&\omega&\omega^2&\omega^3&1&\omega&0&\omega^2&\omega^3&1\\
          0&1&\omega&\omega^2&\omega^3&0&\omega^3&\omega&1&\omega^2&0&1&\omega^2&\omega&\omega^3&1&0&\omega&\omega^3&\omega^2&\omega^3&\omega&0&\omega^2&1\\
          0&\omega^3&1&\omega&\omega^2&1&0&\omega^3&\omega&\omega^2&\omega&0&1&\omega^2&\omega^3&1&\omega^2&0&\omega&\omega^3&1&\omega&\omega^3&0&\omega^2\\
        \end{array}
      \]   
    \end{small}
    If we take the $5 \times 5$ subarrays below the $\omega$s, the $\omega^2$s,
    the $\omega^3$s, and the $1$s of the first row, we obtain the following
    complete system of mutually suitable latin squares.
    \begin{align*}
      &\begin{pmatrix}
         \omega^3&\omega&1&0&\omega^2\\
         1&\omega^3&0&\omega^2&\omega\\
         \omega^3&1&\omega&\omega^2&0\\
         0&\omega^3&\omega&1&\omega^2\\
         1&0&\omega^3&\omega&\omega^2\\
       \end{pmatrix}
      &
      &\begin{pmatrix}
         1&\omega^2&\omega&0&\omega^3\\
         \omega&1&0&\omega^3&\omega^2\\
         1&\omega&\omega^2&\omega^3&0\\
         0&1&\omega^2&\omega&\omega^3\\
         \omega&0&1&\omega^2&\omega^3\\ 
       \end{pmatrix}
      \\
      &\begin{pmatrix}
         1&\omega&\omega^3&\omega^2&0\\
         \omega^3&\omega^2&\omega&0&1\\
         0&\omega&\omega^2&\omega^3&1\\
         1&0&\omega&\omega^3&\omega^2\\
         1&\omega^2&0&\omega&\omega^3\\
       \end{pmatrix}
      &
      &\begin{pmatrix}
         0&\omega&\omega^2&1&\omega^3\\
         \omega&1&\omega^3&\omega^2&0\\
         \omega&0&\omega^2&\omega^3&1\\
         \omega^3&\omega&0&\omega^2&1\\
         1&\omega&\omega^3&0&\omega^2\\
       \end{pmatrix}
    \end{align*}
  \end{ex}
}

 {That the collection of latin squares of the previous example are indeed
  mutually suitable follows at once from the fact that we had extracted them
  from the $\oa_5(25,6,2,1)$ in the way that we had. This construction can be made
  perfectly general.}

 {
  \begin{thm}
    There exists an $\oa_n(n^2,n+1,2,1)$ if and only if there is a complete
    system of mutually suitable latin squares over some $n$-alphabet. If $n$ is
    a prime power, then this system is generated entirely by the $\omega$-shifts
    of a single row and they are directly embedded in the $\oa$.
  \end{thm}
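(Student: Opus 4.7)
The plan is to prove the biconditional by chaining two standard reductions. First, Bose's classical correspondence identifies an $\oa_n(n^2,n+1,2,1)$ with a complete system of $n-1$ MOLS of order $n$. Second, the bijection of \citet{unbiased-real} converts any complete system of $n-1$ MOLS into a complete system of $n-1$ mutually suitable latin squares, and vice versa. Composing these two bridges yields the claimed equivalence; the prime-power refinement will then drop out of Corollary~\ref{thm-main2}.

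For the ``only if'' direction I would fix two of the $n+1$ columns of the OA; strength $2$ with $\lambda=1$ allows us to relabel the $n^2$ rows by pairs $(i,j)\in\A\times\A$, so that the remaining $n-1$ columns display as $n-1$ latin squares of order $n$, and strength $2$ applied to every other pair of columns supplies mutual orthogonality. Applying the \citet{unbiased-real} bijection converts these MOLS to the desired MSLS. For the ``if'' direction one reverses the argument: apply the inverse bijection to obtain $n-1$ MOLS from the given MSLS, then assemble the OA by indexing rows with pairs $(i,j)\in\A\times\A$, placing $i,j$ in the first two columns, and reading off the $n-1$ latin squares as the remaining columns.

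For the prime-power half, take $n=q$ and set $m=1$, $g=q-1$ in Theorem~\ref{thm-main} (equivalently, invoke Corollary~\ref{thm-main2}) to obtain an optimal equidistant $(q+1,\,q^2-1,\,q,\,q)_q$-code generated by $\omega$-shifts of a single codeword. Augmenting with the all-zero codeword gives $q^2$ codewords of length $q+1$ still at pairwise distance exactly $q$; since two codewords of length $q+1$ at distance $q$ agree in exactly one position, no two can agree on any pair of columns, so the projection of the $q^2$ codewords onto any two columns is injective and, by cardinality, a bijection onto $\gf(q)\times\gf(q)$, which is precisely the $\oa_q(q^2,q+1,2,1)$ property. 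The MSLS are then read off directly from this OA by partitioning its rows by the value in a fixed column, as in the preceding example, and the $\omega$-circulance of the underlying BGW from Theorem~\ref{T1} transfers to cyclic generation of each latin square from a single row.

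The main technical point I expect to work through is confirming that the partition-by-a-column extraction in the prime-power case really does coincide with the MSLS produced by the \citet{unbiased-real} bijection applied to the Bose MOLS; this compatibility is what justifies the phrase ``directly embedded'' in the statement. Once that alignment is settled, the cyclic-generation claim is a mechanical consequence of the $\omega$-circulance built into Theorem~\ref{T1}.
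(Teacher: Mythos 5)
The paper offers no written proof of this theorem; it is stated as the capstone of the preceding example and the one-line remark that the extraction of the latin squares from the $\oa_5(25,6,2,1)$ ``can be made perfectly general.'' Your plan follows essentially the same route the paper implies: the equivalence is Bose's correspondence composed with the MOLS--MSLS bijection of \citet{unbiased-real} (which is exactly the bijection the paper cites), and the prime-power refinement comes from turning the $\omega$-shift code into an $\oa_q(q^2,q+1,2,1)$ and slicing it by the value in the first column. Your distance argument for the OA property (equidistance $d=q$ at length $q+1$ forces agreement in exactly one coordinate, hence injectivity on every pair of columns) is correct and is the right way to make the paper's example rigorous.

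Two points to fix. First, you invoke Theorem~\ref{thm-main} and Corollary~\ref{thm-main2} with $m=1$, but both are stated only for $m>1$; you should instead go back to Theorem~\ref{trace-construction}, which does cover $m=1$ (as the paper's own Examples~\ref{Ex2} and \ref{5-ary-ex} show), and run the $\omega$-shift code construction directly, since you only need the distance structure of the code and not the Johnson-bound optimality. Second, your flagged ``main technical point'' --- checking that the partition-by-column extraction agrees with the \citet{unbiased-real} bijection applied to the Bose MOLS --- is not actually what needs proving: the theorem only asserts that \emph{some} complete MSLS system sits inside the OA, not that it is the image of the Bose squares under that bijection. What does need proving, and what neither you nor the paper addresses, is that each extracted $q\times q$ subarray is a latin square. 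Mutual suitability is immediate (two rows of an index-one strength-two OA on $q^2$ runs agree in exactly one column, so two rows drawn from subarrays with distinct first-column labels coincide in exactly one of the remaining positions), and the columns of each subarray are permutations by the OA property applied to column $1$ paired with any other column; but the claim that each \emph{row} of a subarray (i.e., a single codeword restricted to the last $q$ coordinates) is a permutation of $\gf(q)$ does not follow from the OA axioms alone and must be verified from the trace description of the rows of $W$ in Theorem~\ref{trace-construction}. That verification is the real content of ``directly embedded,'' and it is the step your proposal should make explicit.
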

}

\end{document}